\numberwithin{equation}{subsection}
\theoremstyle{plain}
\newtheorem{theorem}{Theorem}
\newtheorem{prop}[theorem]{Proposition}
\newtheorem{lem}[theorem]{Lemma}
\newtheorem{cor}[theorem]{Corollary}
\theoremstyle{remark}
\theoremstyle{definition}
\newcommand{\nbd}[2]{\mathcal{N}_{#2} ({#1}) } 
\newcommand{\F}{{\mathcal F}}
\DeclareMathOperator{\diam}{diam}
\newcommand{\acts}{\curvearrowright}
\DeclareMathOperator{\CAT}{CAT}
\DeclareMathOperator{\Cone}{Cone}
\DeclareMathOperator{\Stab}{Stab}
\newcommand{\boundary}{\partial}
\def\Ga{\Gamma}
\newcommand{\oa}{\overrightarrow}
\def\om{\omega}
\def\Si{\Sigma}
\def\ulim{\mathop{\hbox{\textup{$\om$-lim}}}}
\def\cangle{\widetilde\angle} 
\def\tangle{\angle_{T}}       
\def\Td{d_T}                  
\def\tits{\partial_T}         
\def\RomanianComma#1{\setbox0=\hbox{#1}{\ooalign{\hidewidth
    \lower1.2ex\hbox{$\mspace{1mu}^{\text{,}}$}\hidewidth\crcr\unhbox0}}}
\newcommand{\Drutu}{Dru{\RomanianComma{t}u}}
\newcommand{\showcomments}{yes}
\newsavebox{\commentbox}
\title{Erratum on ``Hadamard spaces with isolated flats''}
\author{G.~Christopher Hruska}
\address{Department of Mathematical Sciences \\
         University of Wisconsin--Milwaukee \\
	 PO Box 413 \\
	 Milwaukee, WI 53201-0413, USA}
\email{chruska@uwm.edu}
\urladdr{http://www.uwm.edu/~chruska}
\author{Bruce Kleiner}
\address{Department of Mathematics\\
Yale University\\
PO Box 208283\\
New Haven, CT 06520-8283}
\email{bruce.kleiner@yale.edu}
\urladdr{http://www.math.yale.edu/~bk255}
\begin{document}

\begin{abstract}
The purpose of this erratum is to correct the proof of
Theorem~A.0.1 in the appendix to \cite{HK05},
which was jointly authored by Mohamad Hindawi, Hruska and Kleiner.
In that appendix, many of the results of \cite{HK05}
about $\CAT(0)$ spaces with isolated flats
are extended to a more general setting in which the isolated subspaces
are not necessarily flats.
However, one step of that extension does not follow from the argument
we used the isolated flats setting.
We provide a new proof that fills this gap.

In addition, we give a more detailed account of several other parts
of Theorem~A.0.1, which were sketched in \cite{HK05}.
\end{abstract}

\maketitle

\section{Introduction}
\label{sec:Introduction}

The purpose of this erratum is to explain a gap in the proof of
\cite[Theorem~A.0.1]{HK05} and to explain how to fill it.
The arguments used to fill this gap use ideas not present in \cite{HK05}.

In addition, we present details for several other portions of the proof
of \cite[Theorem~A.0.1]{HK05},
which were briefly sketched in \cite{HK05}.
The new details for these portions
are easy modifications of arguments from \cite{HK05}.
Nevertheless, the exact nature of these modifications
was, perhaps, not described as explicitly as it could have been.

The main results of \cite{HK05} explain the structure of
a $\CAT(0)$ space $X$ with isolated flats and the structure
of a group $\Gamma$ acting properly, cocompactly and isometrically
on such a space.
In a short appendix, written jointly by the authors and Mohamad Hindawi,
we extend those results to a more general setting in which the
isolated subspaces are not necessarily flats.
Many of the details of this extension are nearly identical
to the details presented for isolated flats.
As a result, the details of the extension were not given explicitly.

However, one step of the proof of \cite[Theorem~A.0.1]{HK05}
does not follow from the same
reasoning given in the isolated flats case.
Specifically, the proof of \cite[Lemma~3.3.1]{HK05}
is correct in the setting of isolated flats
but does not extend directly to the more general setting
required by the appendix.
The gap occurs at the point where we prove that two flats $F,F'$
in the asymptotic cone obtained as ultralimits of flats in $X$
cannot intersect in more than one point.
The argument assumes the existence of many nondegenerate triangles
in $F'$.  In particular, if $x,y \in F \cap F'$
we use that the set of points $z \in F'$ with 
$\Delta(x,y,z)$ nondegenerate is a dense set of $F'$.
This conclusion is certainly true in the isolated flats case,
since the only degenerate triangles in a flat
are those for which $x$, $y$ and $z$
are colinear.
However, this fact is not necessarily true in the more general setting
of the appendix.  For example,
if the isolated subspaces of $X$ are $\delta$--hyperbolic,
their ultralimits are trees, which do not contain nondegenerate
triangles.

We fill this gap by proving Proposition~\ref{prop:AtMostOnePoint},
which states that two different ultralimits of isolated subspaces
cannot intersect in more than one point.
As mentioned above, we also give a more detailed account
of several other parts of the proof of \cite[Theorem~A.0.1]{HK05}.


We begin by recalling the statement of \cite[Theorem~A.0.1]{HK05}.

\begin{theorem}[Theorem~A.0.1, \cite{HK05}]
\label{thm:Main}
Let $X$ be a $\CAT(0)$ space and $\Ga\acts X$ be
a geometric action.
Suppose $\F$ is a
$\Ga$--invariant collection of unbounded, closed, convex subsets.
Assume the following:
\begin{enumerate}
\renewcommand{\theenumi}{\textup{\Alph{enumi}}}
\item \label{item:AppAllFlats}
There is a constant $D<\infty$ such that
each flat $F\subseteq X$ lies in a $D$--tubular neighborhood of
some $C\in \F$.
\item \label{item:AppControlledInt}
For each positive $r<\infty$ there is a constant $\rho=\rho(r)<\infty$
so that for any two distinct elements $C, C' \in \F$ we have
\[
   \diam \bigl( \nbd{C}{r} \cap \nbd{C'}{r} \bigr) < \rho.
\]
\end{enumerate}
Then we conclude:
\begin{enumerate}
\item \label{item:AppPeriodic}
The collection $\F$ is locally finite,
there are only finitely many $\Ga$--orbits in~$\F$,
and each $C \in \F$ is $\Ga$--periodic.

\item \label{item:AppTitsBoundary}
Every connected component of $\tits X$ containing more
than one point is contained in $\tits C$ for a unique $C\in \F$.

\item \label{item:AppConeDirections}
Let $X_\omega$ be an asymptotic cone
$\Cone_\omega (X,\star_n,\lambda_n)$.
Let $\F_\omega$ denote the set of all subspaces $C_\omega \subseteq X_\omega$
of the form $C_\omega=\ulim C_n$ where $C_n \in \F$ and
$\ulim \lambda_n^{-1}\, d(C_n,\star_n) < \infty$.

Then for every $x\in X_\om$,
each connected component of $\Si_x X_\om$ containing more
than one point is contained in $\Si_x C_\om$ for
a unique $C_\om\in \F_\om$.  Furthermore,
if a direction $\oa{x y}$ lies in a nontrivial component of $\Si_x C_\om$
then an initial segment of $[x,y]$ lies in~$C_\om$.

\item \label{item:AppTreeGraded}
Every asymptotic cone $X_\om$ is tree-graded with respect
to the collection $\F_\om$.

\item \label{item:AppRelHyp}
$\Ga$ is hyperbolic relative to any collection $\mathcal{P}$
of representatives of the finitely many conjugacy classes of stabilizers of elements of $\F$.  

\item \label{item:AppBoundaryWellDefined}
Suppose the stabilizer of each $C\in \F$
is a $\CAT(0)$ group with very well-defined boundary.
Then $\Ga$ has a very well-defined boundary.
\end{enumerate}
\end{theorem}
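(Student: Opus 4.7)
The plan is to prove the six parts in the following logical order: first establish part~\ref{item:AppPeriodic}, then prove Proposition~\ref{prop:AtMostOnePoint} (that two distinct members of $\F_\om$ intersect in at most one point), then deduce parts~\ref{item:AppTreeGraded}, \ref{item:AppTitsBoundary}, \ref{item:AppConeDirections}, and~\ref{item:AppRelHyp}, and finally handle~\ref{item:AppBoundaryWellDefined}.

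For part~\ref{item:AppPeriodic}, I follow \cite[Section~3.1]{HK05}: hypothesis~\ref{item:AppControlledInt} forces each compact subset of $X$ to meet only finitely many $C \in \F$ in a fixed radius, and cocompactness of $\Ga\acts X$ then yields local finiteness of $\F$, finiteness of the orbit set, and periodicity of each $C$ under its stabilizer. To prove Proposition~\ref{prop:AtMostOnePoint}, I argue by contradiction. Suppose $C_\om \ne C'_\om$ in $\F_\om$ share distinct points $x\ne y$, realized by representatives $x_n,y_n \in C_n$ and $x'_n,y'_n \in C'_n$ with $d(x_n,x'_n), d(y_n,y'_n) = o(\la_n)$ and $d(x_n,y_n) \sim \la_n\, d_\om(x,y)$. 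Since $X$ is $\CAT(0)$, the geodesic $[x_n,y_n]\subseteq C_n$ and its counterpart $[x'_n,y'_n]\subseteq C'_n$ fellow-travel at distance $o(\la_n)$. Using part~\ref{item:AppPeriodic} together with a diagonal extraction, after translating by elements $\ga_n \in \Ga$ I may reduce to a configuration in which the pairs $(C_n, C'_n)$ take only finitely many forms up to the $\Ga$-action, so that hypothesis~\ref{item:AppControlledInt} applies uniformly. Choosing the radius $r_n = o(\la_n)$ to be just large enough that both $x_n$ and $y_n$ lie in $\nbd{C_n}{r_n}\cap\nbd{C'_n}{r_n}$, the uniform bound on $\diam\bigl(\nbd{C_n}{r_n}\cap\nbd{C'_n}{r_n}\bigr)$ from~\ref{item:AppControlledInt}, combined with the $\CAT(0)$ closest-point projection of $[x_n,y_n]$ to $C'_n$, contradicts the linear growth $d(x_n,y_n)\sim \la_n d_\om(x,y)$.

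Given Proposition~\ref{prop:AtMostOnePoint}, the tree-graded conclusion~\ref{item:AppTreeGraded} will follow from the \Drutu-Sapir criterion: the sets $C_\om$ are convex (as ultralimits of convex sets), distinct members meet in at most one point by the proposition, and every simple geodesic triangle in $X_\om$ lies in some $C_\om$, verified by the argument of \cite[Lemma~3.3.1]{HK05} using hypothesis~\ref{item:AppAllFlats}. Parts~\ref{item:AppTitsBoundary} and~\ref{item:AppConeDirections} then follow by translating the tree-graded structure into statements about Tits-boundary components and spaces of directions at points, as in \cite[Sections~3.4--3.5]{HK05}. Part~\ref{item:AppRelHyp} is an immediate consequence of~\ref{item:AppTreeGraded} via the \Drutu-Sapir characterization of relative hyperbolicity in terms of tree-graded asymptotic cones, and part~\ref{item:AppBoundaryWellDefined} follows from the preceding parts by the boundary argument of \cite[Section~4]{HK05}.

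The main obstacle is Proposition~\ref{prop:AtMostOnePoint}. The original argument in \cite{HK05} exploited an abundance of nondegenerate triangles in a flat, which is unavailable in the generality allowed here, for instance when the $C_n$ are $\delta$-hyperbolic and their ultralimits are $\R$-trees. The replacement strategy above, combining periodicity with a diagonal reduction to finitely many $\Ga$-orbits of pairs and a $\CAT(0)$ projection argument, is essential to bypass this difficulty; ensuring that after translation the basepoint data defining the asymptotic cone is controlled well enough to apply hypothesis~\ref{item:AppControlledInt} is the delicate technical point.
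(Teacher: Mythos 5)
Your proof of Proposition~\ref{prop:AtMostOnePoint} has a genuine gap at its central step, and the attempted repair does not address it. You place $x_n$ and $y_n$ in $\nbd{C_n}{r_n} \cap \nbd{C'_n}{r_n}$ with $r_n = o(\la_n)$ and invoke hypothesis~(\ref{item:AppControlledInt}) to bound $d(x_n,y_n)$. But that hypothesis supplies, for each \emph{fixed} $r$, a constant $\rho(r)$ with no control whatsoever on the growth of $\rho$ as a function of $r$: since $r_n \to \infty$, the resulting bound $\diam\bigl(\nbd{C_n}{r_n} \cap \nbd{C'_n}{r_n}\bigr) < \rho(r_n)$ is perfectly compatible with $d(x_n,y_n) \sim \la_n\, d(x,y)$, because $\rho(r_n)$ may grow faster than $\la_n$. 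To get a contradiction you would need $\rho(r_n) = o(\la_n)$, i.e.\ a sublinear (or at least controlled) intersection function, which is exactly what the hypotheses do not provide. Your proposed fix---translating so that the pairs $(C_n, C'_n)$ fall into finitely many $\Ga$--orbits of \emph{pairs} so that ``hypothesis~(\ref{item:AppControlledInt}) applies uniformly''---misdiagnoses the difficulty: the constant $\rho(r)$ is already uniform over all pairs $C \ne C'$, so uniformity across pairs is not the issue; moreover the reduction itself is false, since $d(C_n,C'_n)$ may be unbounded and the pairs then lie in infinitely many orbits. The appeal to the $\CAT(0)$ nearest-point projection onto $C'_n$ adds nothing here, as projection onto a convex set is merely $1$--Lipschitz and yields no contraction without further input.

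The paper closes this gap quite differently: it first proves assertion~(\ref{item:AppTitsBoundary}) directly (via Proposition~\ref{prop:5.2.1}, whose proof from \cite{HK05} never uses that the elements of $\F$ are flats), and then runs a qualitative compactness argument---Proposition~\ref{prop:NearShortestPath}---showing there is a single constant $\epsilon_0$ such that \emph{every} geodesic from $C$ to $C'$ passes within $\epsilon_0$ of both endpoints of a shortest connecting geodesic. The proof of that proposition takes a sequence of counterexamples, uses assertion~(\ref{item:AppPeriodic}) to translate to a fixed $C$ (and, when $d(C_i,C'_i)$ stays bounded, a fixed $C'$), extracts limit points in $\boundary X$, and derives a contradiction with assertion~(\ref{item:AppTitsBoundary}) using Ballmann's flat-strip lemma in the form of Corollary~\ref{cor:Ballmann}. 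This produces the uniform four-point inequality $d(a,b) + d(a',b') \le d(a,a') + d(b,b') + \epsilon_1$ of Corollary~\ref{cor:FourPoint}, with an \emph{additive constant} independent of the pair and the points; after rescaling by $\la_n^{-1}$ the constant disappears in the cone and $d(a,b)=0$ follows at once. Note also that your plan derives assertion~(\ref{item:AppTitsBoundary}) \emph{after} the tree-graded structure, whereas the paper's argument needs~(\ref{item:AppTitsBoundary}) \emph{before} Proposition~\ref{prop:AtMostOnePoint}; converting hypothesis~(\ref{item:AppControlledInt}) into a quantitatively uniform statement via boundary geometry is precisely the new idea of the erratum, and it is absent from your proposal.
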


In the sequel we will always assume that $X$, $\Gamma$ and $\F$
satisfy the hypotheses of Theorem~\ref{thm:Main}
(except in Lemma~\ref{lem:Ballmann} and Corollary~\ref{cor:Ballmann}).

\subsection{Acknowledgements}

The authors are grateful to Igor Belegradek for
many conversations about the original version of this article,
which led the authors to discover the gap in \cite{HK05}.
The first author is supported in part by NSF grant DMS-0731759.
The second author is supported in part by NSF grants DMS-0701515 and DMS-0805939.

\section{Proof of Theorem~\ref{thm:Main} assertions (\ref{item:AppPeriodic})
and (\ref{item:AppTitsBoundary})}

The proofs in this section are all easy modifications of arguments
from \cite{HK05}.

\begin{lem}[\emph{cf} Lemma~3.1.1, \cite{HK05}]
\label{lem:3.1.1}
The collection $\F$ is locally finite; in other words, only finitely
many elements of $\F$ intersect any given compact set.
\end{lem}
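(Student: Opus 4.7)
The plan is to argue by contradiction. Suppose some compact set $K \subseteq X$ meets infinitely many pairwise distinct elements $C_1, C_2, \ldots$ of $\F$, and choose $x_n \in K \cap C_n$. After passing to a subsequence we may assume $x_n \to x$ for some $x \in K$.

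Next I would exploit that each $C_n$ is an unbounded, closed, convex subset of the proper $\CAT(0)$ space $X$ (properness is supplied by the geometric $\Ga$--action). A standard Arzela--Ascoli argument applied to segments $[x_n,y]$ with $y \in C_n$ and $d(x_n,y) \to \infty$ produces a unit-speed geodesic ray $\gamma_n \colon [0,\infty) \to C_n$ with $\gamma_n(0)=x_n$; convexity and closedness of $C_n$ keep the ray inside $C_n$. For each fixed $T$ the points $\gamma_n(T)$ lie in a bounded subset of $X$, so by a second application of Arzela--Ascoli (plus a diagonal argument) we may pass to a further subsequence along which the $\gamma_n$ converge uniformly on compact subintervals to a unit-speed ray $\gamma$ from $x$.

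Now I would invoke hypothesis~(\ref{item:AppControlledInt}). Fix any $\eps > 0$ and set $T_0 \defeq \rho(\eps)+1$, where $\rho$ is the function from that hypothesis. By uniform convergence on $[0,T_0]$, for all sufficiently large $n,m$ and every $t \in [0,T_0]$ we have $d\bigl(\gamma_n(t),\gamma_m(t)\bigr) < \eps$. Since $\gamma_n(t) \in C_n$ and $d\bigl(\gamma_n(t),C_m\bigr) \leq d\bigl(\gamma_n(t),\gamma_m(t)\bigr) < \eps$, this gives
\[
  \gamma_n\bigl([0,T_0]\bigr) \subseteq \nbd{C_n}{\eps} \cap \nbd{C_m}{\eps}.
\]
The left-hand side has diameter $T_0 > \rho(\eps)$, contradicting hypothesis~(\ref{item:AppControlledInt}).

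The conceptual core of the argument is that two distinct sets $C_n, C_m$ which both reach into the compact set $K$ inherit long, nearly parallel initial rays, which then violate the controlled intersection bound. The only step requiring any care is the Arzela--Ascoli extraction of a limit ray of rays inside a general unbounded closed convex set, and this is where properness of $X$ (and hence the geometric action) enters in an essential way.
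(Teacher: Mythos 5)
Your proof is correct, but it takes a genuinely different route from the paper's. The paper fixes $r=1$ in hypothesis~(\ref{item:AppControlledInt}), observes that each $C\in\F$ meeting $\overline{B}(x,r)$ must (being connected and unbounded) intersect the larger ball $\overline{B}(x,r+\rho)$ in a set of diameter at least $\rho$, and then applies the Blaschke selection theorem: infinitely many such $C_i$ would yield slices $C_i\cap\overline{B}(x,r+\rho)$ converging in the Hausdorff metric, so two distinct $C_i,C_j$ would share a diameter-$\geq\rho$ set inside $\nbd{C_i}{1}\cap\nbd{C_j}{1}$, contradicting~(\ref{item:AppControlledInt}). You instead build a unit-speed geodesic ray $\gamma_n\subseteq C_n$ from each $x_n$ and extract a limit ray by two rounds of Arzel\`a--Ascoli, so that long initial segments of $\gamma_n,\gamma_m$ are $\eps$-close and their common $T_0$-segment violates~(\ref{item:AppControlledInt}) with $r=\eps$. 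The trade-off: your argument uses convexity of the $C_n$ in an essential way (to run geodesics inside them), whereas the paper's argument uses only that each $C$ is closed, connected, and unbounded --- a strictly weaker input --- at the cost of invoking Hausdorff-metric compactness of closed subsets of a compact set; your version is more hands-on and avoids that selection theorem. One small correction: properness of $X$ (which both arguments need, yours for Arzel\`a--Ascoli and the paper's for Blaschke selection) is not literally a consequence of the geometric action; it is a standing hypothesis carried over from \cite{HK05}, where the spaces considered are proper. With that caveat, all your steps check out, including the key estimates $\gamma_n\bigl([0,T_0]\bigr)\subseteq\nbd{C_n}{\eps}\cap\nbd{C_m}{\eps}$ and $\diam\gamma_n\bigl([0,T_0]\bigr)=T_0>\rho(\eps)$, which indeed contradict the hypothesis since $C_n\neq C_m$.
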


\begin{proof}
It suffices to show that only finitely many elements of $\F$
intersect each closed metric ball $\overline{B}(x,r)$.
Let $\F_0$ be the collection of all $C \in \F$ intersecting this ball.
By hypothesis (\ref{item:AppControlledInt}) of Theorem~\ref{thm:Main},
there exists $\rho=\rho(1)$ such that for any distinct elements
$C,C' \in \F$ we have
\[
   \diam \bigl( \nbd{C}{1} \cap \nbd{C'}{1} \bigr) < \rho.
\]
If we let $\kappa:=r+\rho$ then for each $C \in \F_0$ the set
$C \cap \overline{B}(x,\kappa)$ has diameter at least $\rho$
since $C$ is connected and unbounded.

If $\F_0$ is infinite then it contains a sequence of distinct
elements $(C_i)$ such that the compact sets $C_i \cap \overline{B}(x,\kappa)$
converge in the Hausdorff metric.
In particular, whenever $i$ and $j$ are sufficiently large,
the Hausdorff distance between $C_i \cap \overline{B}(x,\kappa)$
and $C_j \cap \overline{B}(x,\kappa)$ is less than $1$.
But $C_i \cap \overline{B}(x,\kappa)$ has diameter at least $\rho$
and lies in $\nbd{C_i}{1} \cap \nbd{C_j}{1}$,
contradicting our choice of $\rho$.
\end{proof}

\begin{proof}[Proof of Theorem~\ref{thm:Main}(\ref{item:AppPeriodic})]
The collection $\F$ is $\Gamma$--invariant by hypothesis
and is locally finite by Lemma~\ref{lem:3.1.1}.
Now \cite[Lemma~3.1.2]{HK05} implies that such a collection of subspaces
contains only finitely many $\Gamma$--orbits
and that each $C \in \F$ is $\Gamma$--periodic,
provided that each $C \in \F$ is a flat.
However the hypothesis that elements of $\F$ are flats is never used
in the proof of Lemma~3.1.2.
Thus the same conclusion holds in the present setting.
\end{proof}

The following three results were proved in \cite{HK05}
under the additional hypothesis that the elements of $\F$ are flats.
Again this hypothesis is never used in the proofs.

\begin{lem}[\emph{cf} Lemma~3.2.2, \cite{HK05}]
There is a decreasing function $D_1 = D_1(\theta) <\infty$
such that if $S \subset X$ is a flat sector of angle $\theta > 0$
then $S \subset \nbd{C}{D_1(\theta)}$ for some
$C \in \mathcal{F}$.\qed
\end{lem}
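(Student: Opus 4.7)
The plan is to follow the proof of \cite[Lemma~3.2.2]{HK05} essentially verbatim, observing that the assumption there that each $C \in \F$ is a flat is never used. The argument has three ingredients: (i)~from a flat sector of angle~$\theta$ one produces a $2$--flat $F \subset X$ whose distance from $S$ is bounded by a function of~$\theta$; (ii)~hypothesis~(\ref{item:AppAllFlats}) places $F$ in a $D$--tubular neighborhood of some $C \in \F$; and (iii)~hypothesis~(\ref{item:AppControlledInt}) ensures that a single such $C$ suffices for the whole sector.

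For~(i), let $S$ have vertex $v$ and angle~$\theta$, and let $p_n \in S$ march to infinity along the bisecting ray. A flat Euclidean disk of radius comparable to $d(p_n,v)\sin(\theta/2)$ sits inside $S$ around $p_n$. Using cocompactness of $\Gamma \acts X$, choose $\gamma_n \in \Gamma$ with $\gamma_n p_n$ in a fixed compact set and extract a subsequential limit (Hausdorff or ultralimit) of the translated disks. This limit is an isometrically embedded $2$--flat $F \subset X$, and on any bounded scale around $p_n$ the sector $S$ lies in a neighborhood of $\gamma_n^{-1} F$ whose radius tends to $0$.

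For~(ii) and~(iii), hypothesis~(\ref{item:AppAllFlats}) supplies $C' \in \F$ with $F \subseteq \nbd{C'}{D}$, so for each large $n$ a bounded ball around $p_n$ in $S$ is contained in $\nbd{\gamma_n^{-1} C'}{D + o(1)}$. If two such elements $\gamma_n^{-1} C'$ were distinct for different $n$, the bounded-overlap condition~(\ref{item:AppControlledInt}) would cap the diameter of the intersection of their neighborhoods and prevent the connected sector $S$ from crossing between them once $n$ is large enough. This forces a single $C \in \F$ with $S \subseteq \nbd{C}{D_1(\theta)}$, where $D_1(\theta)$ is a decreasing function of~$\theta$.

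At no point is the internal geometry of $C$ used---only the tubular neighborhood conditions~(\ref{item:AppAllFlats}) and~(\ref{item:AppControlledInt}) together with the $\CAT(0)$ geometry of $X$ and its cocompact $\Gamma$--action. The main subtlety, as in \cite{HK05}, is the limiting argument in~(i) that produces the approximating $2$--flat; this step involves only the ambient space, not the elements of $\F$, and so transfers without modification.
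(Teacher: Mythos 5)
Your proposal is correct and matches the paper's justification exactly: the erratum offers no new argument for this lemma, merely the observation (stated just before the lemma) that the proof of \cite[Lemma~3.2.2]{HK05} never uses the hypothesis that the elements of $\F$ are flats, only hypotheses~(\ref{item:AppAllFlats}) and~(\ref{item:AppControlledInt}) together with the $\CAT(0)$ geometry and cocompact action on the ambient space. Your sketch of the internal structure of the HK05 argument (limiting $2$--flat from disks in the sector, then hypotheses~(\ref{item:AppAllFlats}) and~(\ref{item:AppControlledInt}) to pin down a single $C$) goes beyond what the paper records, but it correctly identifies where flatness of $C$ could in principle have entered and verifies that it does not.
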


\begin{lem}[\emph{cf} Lemma~3.2.3, \cite{HK05}]
For all $\theta_0>0$ and $R<\infty$, there exist
$\delta_1=\delta_1(\theta_0,R)$ and $\rho_1=\rho_1(\theta_0,R)$
such that if $p,x,y \in X$ satisfy
$d(p,x), d(p,y) > \rho_1$ and
\[
   \theta_0 < \angle_p(x,y) \le \cangle_p(x,y)
   < \angle_p(x,y) + \delta_1 < \pi - \theta_0
\]
then there exists $C \in \mathcal{F}$ such that
\[
   \bigl([p,x] \cup [p,y]\bigr) \cap B(p,R)
   \subset \nbd{C}{D_1(\theta_0)}. \rlap{\hspace{1.29in}\qedsymbol}
\]
\end{lem}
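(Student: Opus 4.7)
The plan is to argue by contradiction via a limit argument that reduces the configuration to a flat sector, which is then handled by the previous lemma. Suppose the conclusion fails: there exist $\theta_0 > 0$ and $R < \infty$, sequences $\de_n \downarrow 0$ and $\rho_n \uparrow \infty$, and triples $(p_n, x_n, y_n)$ meeting the hypothesis with $\de_1 = \de_n$ and $\rho_1 = \rho_n$, yet for which no $C \in \F$ places $\bigl([p_n, x_n] \cup [p_n, y_n]\bigr) \cap B(p_n, R)$ inside $\nbd{C}{D_1(\theta_0)}$. Since $\Ga \acts X$ cocompactly and $\F$ is $\Ga$--invariant, translate each triple so that $p_n$ lies in a fixed compact fundamental domain. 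Properness of $X$ then allows passage to a subsequence with $p_n \to p_\infty$, and the geodesic segments $[p_n, x_n]$ and $[p_n, y_n]$ converge on compact subsets to geodesic rays $\ga_1, \ga_2$ issuing from $p_\infty$; along a further subsequence $\theta \defeq \lim_n \angle_{p_n}(x_n, y_n)$ exists in $[\theta_0, \pi - \theta_0]$.

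The key step is to show that the convex hull of $\ga_1 \cup \ga_2$ is a flat Euclidean sector of angle $\theta$ based at $p_\infty$. Fix $a \in \ga_1$ and $b \in \ga_2$ at finite distance from $p_\infty$, and let $a_n \in [p_n, x_n]$, $b_n \in [p_n, y_n]$ be the corresponding points at the same distances from $p_n$. CAT(0) monotonicity of comparison angles yields
\[
\angle_{p_n}(x_n, y_n) \;\le\; \cangle_{p_n}(a_n, b_n) \;\le\; \cangle_{p_n}(x_n, y_n) \;<\; \angle_{p_n}(x_n, y_n) + \de_n.
\]
Letting $n \to \infty$ and using continuity of the comparison angle in the side lengths gives $\cangle_{p_\infty}(a, b) = \theta$. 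On the other hand, upper semicontinuity of Alexandrov angles yields $\angle_{p_\infty}(\ga_1, \ga_2) \ge \theta$, while the CAT(0) inequality gives $\cangle_{p_\infty}(a, b) \ge \angle_{p_\infty}(a, b) = \angle_{p_\infty}(\ga_1, \ga_2)$. Combining, $\cangle_{p_\infty}(a, b) = \angle_{p_\infty}(a, b) = \theta$, so by the equality case of CAT(0) the triangle $\Delta(p_\infty, a, b)$ bounds a flat Euclidean triangle. Varying $a$ and $b$, the convex hull $S$ of $\ga_1 \cup \ga_2$ is a flat sector of angle $\theta$.

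Applying the previous lemma to $S$ produces $C_\infty \in \F$ with $S \subset \nbd{C_\infty}{D_1(\theta)}$; since $D_1$ is decreasing and $\theta \ge \theta_0$, also $S \subset \nbd{C_\infty}{D_1(\theta_0)}$. For all sufficiently large $n$, the initial segments of $[p_n, x_n]$ and $[p_n, y_n]$ of length $R$ lie arbitrarily close to the corresponding initial segments of $\ga_1, \ga_2$ inside $B(p_\infty, R)$, and hence inside $\nbd{C_\infty}{D_1(\theta_0)}$ after a routine absorption of the small slack (using local finiteness from Lemma~\ref{lem:3.1.1}, or if needed a harmless enlargement of $D_1$ in the statement of the previous lemma that does not affect subsequent applications). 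This contradicts our choice of counterexamples.

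The main obstacle is extracting the flat sector in the limit: this rests on the interplay between CAT(0) monotonicity of $\cangle$ and upper semicontinuity of $\angle$, which together force the sandwiched equality $\cangle_{p_\infty}(a,b) = \angle_{p_\infty}(a,b) = \theta$ along every pair $(a, b) \in \ga_1 \times \ga_2$, and hence the flat triangle condition on the entire convex hull of the limit rays. Once the flat sector is in hand, the previous lemma together with properness and local finiteness completes the proof in a standard way.
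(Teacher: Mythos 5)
Your proof is correct and is essentially the argument the paper relies on: the erratum gives no new proof of this lemma, observing only that the limiting argument of \cite[Lemma~3.2.3]{HK05} never uses flatness of the elements of $\F$, and that argument is exactly your reduction (translate a sequence of counterexamples with $\delta_n \to 0$, $\rho_n \to \infty$ by $\Ga$--cocompactness, extract limit rays, squeeze the Alexandrov angle against the comparison angle via monotonicity and semicontinuity, invoke the Flat Triangle Lemma to exhibit the convex hull of the limit rays as a flat sector of angle $\theta \ge \theta_0$, and feed that sector into the preceding lemma using that $D_1$ is decreasing). The one soft spot --- the $\epsilon$--slack arising because the approximating segments are only close to the limit sector --- you flag yourself, and your fix of harmlessly enlarging $D_1$ works (later results only need $D_1(\theta_0)$ to be some uniform finite constant); alternatively it can be avoided entirely by passing the failure condition $d(z_n, C) \ge D_1(\theta_0)$ for all $C \in \F$ to a limit point $z$ of the sector $S$, which contradicts $z \in S \subset \nbd{C_\infty}{D_1(\theta)}$ with the tubular neighborhood open and $D_1(\theta) \le D_1(\theta_0)$.
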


\begin{prop}[\emph{cf} Proposition~5.2.1, \cite{HK05}]
\label{prop:5.2.1}
For each $\theta_0>0$ there is a positive constant
$\delta_4=\delta_4(\theta_0)$ such that whenever $p \in X$ and
$\xi,\eta \in \tits X$ satisfy
\begin{equation}
\label{eqn:CloseToTits}
\tag{$\dag$}
   \theta_0 \le \angle_p(\xi,\eta) \le \tangle(\xi,\eta)
   \le \angle_p(\xi,\eta) + \delta_4 \le \pi - \theta_0
\end{equation}
then there exists $C \in \mathcal{F}$ so that
\[
   [p,\xi] \cup [p,\eta] \subset \nbd{C}{D_1(\theta_0)}.
   \rlap{\hspace{1.65in}\qedsymbol}
\]
\end{prop}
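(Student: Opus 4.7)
The plan is to deduce Proposition~\ref{prop:5.2.1} from Lemma~3.2.3 by applying that lemma at arbitrarily large scales and then using the controlled-intersection hypothesis~(B) to show the resulting subspaces collapse to a single element of $\F$. Fix $\theta_0 > 0$ and pick an auxiliary $\theta_0' \in (0, \theta_0)$ so that the strict inequalities required by Lemma~3.2.3 become available; this costs only an enlargement from $D_1(\theta_0)$ to $D_1(\theta_0')$ in the final tubular neighborhood, which by monotonicity of $D_1$ is harmless up to absorbing the constant.

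The key observation is that $(\dag)$, being a condition on the Tits angle, passes to every pair $x \in [p,\xi]$, $y \in [p,\eta]$ with $x, y \neq p$: along rays emanating from $p$ one has $\angle_p(x,y) = \angle_p(\xi,\eta)$, and in any $\CAT(0)$ space the comparison angle is bounded above by the Tits angle, $\cangle_p(x,y) \le \tangle(\xi,\eta)$. Combining with $(\dag)$ yields
\[
\theta_0' < \angle_p(x,y) \le \cangle_p(x,y) \le \angle_p(x,y) + \delta_4 \le \pi - \theta_0',
\]
so the angle hypothesis of Lemma~3.2.3 for parameter $\theta_0'$ is satisfied at every scale $R$ for which $\delta_4 \le \delta_1(\theta_0', R)$. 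Choosing $\delta_4$ small enough that this holds at every sufficiently large $R$, and taking $x_R \in [p,\xi]$, $y_R \in [p,\eta]$ at distance exceeding $\rho_1(\theta_0', R)$ from $p$, Lemma~3.2.3 yields $C_R \in \F$ with $([p,\xi] \cup [p,\eta]) \cap B(p,R) \subset \nbd{C_R}{D_1(\theta_0')}$. To collapse this family, set $\rho_0 := \rho\bigl(D_1(\theta_0')\bigr)$ from~(B); then for any $R \le R'$ with $R > \rho_0$, the segment $[p,\xi] \cap B(p,R)$ lies in $\nbd{C_R}{D_1(\theta_0')} \cap \nbd{C_{R'}}{D_1(\theta_0')}$ and has diameter exceeding $\rho_0$, forcing $C_R = C_{R'}$ by~(B). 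Letting $R \to \infty$ identifies a single $C \in \F$ with $[p,\xi] \cup [p,\eta] \subset \nbd{C}{D_1(\theta_0')}$.

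The principal obstacle is the uniform choice of $\delta_4$: one needs $\delta_4 \le \delta_1(\theta_0', R)$ simultaneously at every $R > \rho_0$, i.e., $\inf_{R > \rho_0} \delta_1(\theta_0', R) > 0$. If this bound is not immediate from the statement of Lemma~3.2.3, one may either inspect its proof to confirm that $\delta_1$ depends only weakly on $R$, or bypass the issue by a contradiction argument: from a hypothetical sequence of counterexamples $(p_n, \xi_n, \eta_n, \delta_n)$ with $\delta_n \to 0$, a diagonal application of Lemma~3.2.3 combined with~(B) produces the required single element of $\F$, contradicting the failure hypothesis.
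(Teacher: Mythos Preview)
The paper does not actually prove this proposition: it is one of three results listed with a bare \qedsymbol\ after the sentence ``The following three results were proved in \cite{HK05} under the additional hypothesis that the elements of $\F$ are flats.  Again this hypothesis is never used in the proofs.''  So there is no argument here to compare against beyond the reference to \cite{HK05}.

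Your approach is reasonable in outline, but the gap you yourself identify is real and your proposed repairs do not close it.  As stated, Lemma~3.2.3 gives $\delta_1=\delta_1(\theta_0',R)$, and nothing prevents $\delta_1\to 0$ as $R\to\infty$; saying one could ``inspect its proof'' is not a proof.  The contradiction sketch is also incomplete: for a sequence $(p_n,\xi_n,\eta_n)$ with $\delta_n\to 0$, a ``diagonal application'' of Lemma~3.2.3 would, for each $n$, produce a subspace $C_n$ covering only a ball of some radius $R_n$ about $p_n$, and you have given no mechanism that turns this into a contradiction with the assumption that the \emph{full} rays $[p_n,\xi_n]\cup[p_n,\eta_n]$ fail to lie in any single $D_1$--neighborhood.

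There is also a smaller defect: since $D_1$ is decreasing, passing to $\theta_0'<\theta_0$ yields $D_1(\theta_0')\ge D_1(\theta_0)$, so you end up with the rays in $\nbd{C}{D_1(\theta_0')}$, which is weaker than the stated conclusion $\nbd{C}{D_1(\theta_0)}$.  ``Absorbing the constant'' is not available, because the proposition specifies the radius.

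A cleaner route that avoids the $R$--dependence is to use Proposition~\ref{prop:3.2.5} rather than Lemma~3.2.3.  Your key observation---that $(\dag)$ forces $\cangle_p(x,y)-\angle_p(x,y)\le\delta_4$ for \emph{every} pair $x\in[p,\xi]$, $y\in[p,\eta]$---means each triangle $\Delta(p,x,y)$ has small angle excess at $p$; taking $d(p,x)=d(p,y)$ makes the comparison triangle isosceles with base angles bounded away from $0$ and $\pi$ in terms of $\theta_0$, and a standard $\CAT(0)$ rigidity/continuity argument (the flat--triangle lemma and its quantitative form) then controls all three vertex angles.  Proposition~\ref{prop:3.2.5} applies with $\delta_2$ depending only on $\theta_0$, and your collapsing step via hypothesis~(\ref{item:AppControlledInt}) then goes through unchanged.
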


\begin{proof}[Proof of Theorem~\ref{thm:Main}(\ref{item:AppTitsBoundary})]
The proof is essentially the same as for the
forward implication of \cite[Theorem~5.2.5]{HK05}.
By (\ref{item:AppControlledInt}), 
it is clear that if $C,C' \in \mathcal{F}$ are distinct
then $\tits C \cap \tits C' = \emptyset$.
If $\xi,\eta \in \tits X$ and
$0< \tangle(\xi,\eta) < \pi$
then we can find $\theta_0>0$ and $p \in X$ such that (\ref{eqn:CloseToTits})
holds for $\delta_4=\delta_4(\theta_0)$.
Hence by Proposition~\ref{prop:5.2.1}
we have $[p,\xi] \cup [p,\eta] \subset \nbd{C}{D_1(\theta_0)}$
for some $C \in \mathcal{F}$,
which means that $\{\xi,\eta\} \subset \tits C$.

More generally, suppose $\xi,\eta$ are distinct points in the same
component of $\tits X$.
Then there is a sequence $\xi=\xi_0,\dots,\xi_\ell=\eta$
such that $0< \tangle(\xi_i,\xi_{i+1}) <\pi$.
By the previous paragraph, it follows that $\{\xi,\eta\} \subset\tits C$
for some $C \in \mathcal{F}$.
\end{proof}

\section{Filling the gap}

The goal of this section is to prove Proposition~\ref{prop:AtMostOnePoint}
using new arguments not found in \cite{HK05}.
We will use the following result due to Ballmann.

\begin{lem}[Lemma~III.3.1, \cite{Ballmann95}]
\label{lem:Ballmann}
Let $X$ be any proper $\CAT(0)$ space.  Let $c$ be a geodesic line in $X$
which does not bound a flat strip of width $R>0$.
Then there are neighborhoods $U$ of $c(\infty)$ and $V$ of $c(-\infty)$
in $\overline{X}$ such that for any $\zeta \in U$ and $\eta \in V$
there is a geodesic from $\zeta$ to $\eta$, and for any such
geodesic $c'$ we have $d\bigl(c',c(0)\bigr) < R$.
\end{lem}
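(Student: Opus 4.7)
My plan is to argue by contradiction, via an Arzel\`a--Ascoli extraction followed by the $\CAT(0)$ flat strip theorem. If the conclusion fails, I would choose nested shrinking neighborhoods $U_n \downarrow \{c(\infty)\}$ and $V_n \downarrow \{c(-\infty)\}$ in $\overline{X}$, and for each $n$ find $\zeta_n \in U_n$, $\eta_n \in V_n$, and a geodesic $c_n$ from $\zeta_n$ to $\eta_n$ with $d(c_n, c(0)) \geq R$. I would parametrize each $c_n$ by arc length so that $c_n(0)$ is its closest point to $c(0)$, in which case the domain of $c_n$ becomes an interval $[-T_n^-, T_n^+]$ with $T_n^{\pm} \to \infty$ (since $\zeta_n, \eta_n$ leave every compact set in $X$).

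Next, I would extract a limit geodesic line. The technical heart of the argument is to show that the base points $c_n(0)$ stay in a bounded region of $X$; the no-flat-strip hypothesis is what rules out an unbounded perpendicular excursion (without it, the Euclidean plane furnishes a counterexample, since there $c$ trivially bounds flat strips of every width). Once boundedness of $c_n(0)$ is established, the $c_n$ form an equicontinuous family of $1$-Lipschitz maps with uniformly bounded values at $0$, so Arzel\`a--Ascoli, diagonalized across compact intervals of $\mathbb{R}$, yields a subsequence converging in the compact-open topology to an isometric embedding $c' \colon \mathbb{R} \to X$. Because $\zeta_n \to c(\infty)$ and $\eta_n \to c(-\infty)$ in the visual bordification while $T_n^{\pm} \to \infty$, the limit $c'$ is a complete geodesic line with $c'(\pm\infty) = c(\pm\infty)$, and continuity gives $d(c'(0), c(0)) \geq R$.

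Finally, I would apply the $\CAT(0)$ flat strip theorem: two geodesic lines in a $\CAT(0)$ space which share both endpoints at infinity are parallel at constant distance and together bound an isometrically embedded flat strip of exactly that width. Applied to $c$ and $c'$, this produces a flat strip of width $d(c, c') = d(c(0), c'(0)) \geq R$, directly contradicting the hypothesis that $c$ bounds no flat strip of width $R$.

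The main obstacle will be the bookkeeping in the extraction step: rigorously verifying that the sequence $c_n(0)$ remains bounded using the no-flat-strip hypothesis, and ensuring that cone-topology convergence of the endpoints $\zeta_n, \eta_n$ in $\overline{X}$ interacts correctly with compact-open convergence of the $c_n$ so that the limit line $c'$ inherits the endpoints at infinity of $c$. These steps are standard but delicate, and would follow the lines of Ballmann's original proof.
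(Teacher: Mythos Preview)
The paper does not give its own proof of this lemma: it is quoted verbatim as Lemma~III.3.1 of Ballmann's book \cite{Ballmann95} and used as a black box, so there is no argument in the paper to compare your proposal against. Your sketch is, in outline, exactly Ballmann's proof---a contradiction argument producing a limiting bi-infinite geodesic asymptotic to $c$ at both ends, followed by an appeal to the Flat Strip Theorem.

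Two small remarks on your write-up. First, the lemma has two conclusions: existence of a geodesic from $\zeta$ to $\eta$, and the distance bound $d\bigl(c',c(0)\bigr)<R$ for every such geodesic. Your contradiction hypothesis only negates the second, so you should also indicate why existence holds (for $\zeta,\eta\in X$ this is trivial; for ideal endpoints it follows from properness and Arzel\`a--Ascoli once one knows the approximating segments stay near $c(0)$). Second, you flag the boundedness of $c_n(0)$ as the ``main obstacle'' and attribute it to the no-flat-strip hypothesis, but in fact for endpoints $\zeta_n,\eta_n\in X$ converging to $c(\pm\infty)$ the comparison angle $\tilde\angle_{c(0)}(\zeta_n,\eta_n)$ tends to~$\pi$, and the $\CAT(0)$ inequality alone forces $d\bigl(c(0),[\zeta_n,\eta_n]\bigr)\to 0$; the no-flat-strip hypothesis enters only at the very end, to derive the contradiction from the limiting parallel line. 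So the structure of the argument is slightly different from how you have allocated the difficulty, though the ingredients are the same.
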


The following corollary of Ballmann's result
was observed by Hindawi in the setting of Hadamard manifolds.  

\begin{cor}[\emph{cf} Proposition~3.3, \cite{Hindawi}]
\label{cor:Ballmann}
Let $X$ be any proper $\CAT(0)$ space.
Suppose $p \in X$ and let $[x_n,y_n]$ be a sequence of geodesic
segments in $X$ such that $x_n$ and $y_n$ converge respectively to
$\xi_x$ and $\xi_y \in \boundary X$.
If $\Td(\xi_x,\xi_y) > \pi$ then the distances
$d\bigl(p,[x_n,y_n]\bigr)$ are bounded above as $n \to \infty$.
\end{cor}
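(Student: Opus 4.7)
The plan is to produce a single geodesic line $c\colon\mathbb{R}\to X$ from $\xi_y$ to $\xi_x$ satisfying the hypothesis of Lemma~\ref{lem:Ballmann}, and then apply that lemma to the segments $[x_n,y_n]$ once $n$ is large. Since the Tits angle never exceeds $\pi$ and $\Td(\xi_x,\xi_y)>\pi$, we must have $\tangle(\xi_x,\xi_y)=\pi$; in a proper $\CAT(0)$ space such an antipodal pair is joined by a geodesic line (a standard consequence of the flat strip and angle theorems, see e.g.\ Bridson--Haefliger II.9), providing the desired $c$ with $c(\infty)=\xi_x$ and $c(-\infty)=\xi_y$.

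Next I would verify that there exists some $R_0>0$ for which $c$ bounds no flat strip of width $R_0$. Suppose instead that $c$ bounds flat strips of widths tending to infinity. After passing to a subsequence one may assume all these strips lie on a single side of $c$, and $\CAT(0)$ uniqueness of geodesics forces them to nest. Their union is then a flat half-plane $H$ with $c\subset\partial H$, whose boundary at infinity is a round semicircular arc of Tits length $\pi$ from $\xi_x$ to $\xi_y$. This contradicts $\Td(\xi_x,\xi_y)>\pi$.

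Having fixed such an $R_0$, Lemma~\ref{lem:Ballmann} furnishes neighborhoods $U$ of $\xi_x$ and $V$ of $\xi_y$ in $\ol{X}$ such that every geodesic joining a point of $U$ to a point of $V$ comes within $R_0$ of $c(0)$. Because $x_n\to\xi_x$ and $y_n\to\xi_y$, for all sufficiently large $n$ we have $x_n\in U$ and $y_n\in V$; applying the conclusion to the unique segment $[x_n,y_n]$ yields
\[
   d\bigl(p,[x_n,y_n]\bigr)\;\le\;d\bigl(p,c(0)\bigr)+R_0,
\]
and the finitely many remaining segments are individually bounded. The step I expect to be the main obstacle is the ``strips of every width $\Rightarrow$ flat half-plane'' claim: one must first pass to a subsequence of strips on a common side of $c$ and then invoke $\CAT(0)$ uniqueness carefully enough to see that increasing-width strips nest, so their union really is a flat half-plane containing $c$ in its boundary.
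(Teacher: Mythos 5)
There is a genuine gap at the first step, and it is exactly the step where the real content of the corollary lives. You reduce $\Td(\xi_x,\xi_y)>\pi$ to $\tangle(\xi_x,\xi_y)=\pi$ and then assert that such an antipodal pair is joined by a geodesic line as ``a standard consequence of the flat strip and angle theorems.'' No such statement is true: in a proper $\CAT(0)$ space, two boundary points at Tits \emph{angle} exactly $\pi$ need not be connected by any geodesic (antipodality does not imply visibility; the flat strip theorem concerns geodesics that already exist and stay parallel, and Bridson--Haefliger II.9 contains no existence result of the kind you invoke). Worse, your reduction throws away precisely the hypothesis that makes existence true: it is the strict inequality for the Tits \emph{length metric}, $\Td(\xi_x,\xi_y)>\pi$, that forces a connecting geodesic to exist. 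That implication is a substantive theorem --- Ballmann's Theorem~II.4.11, which is exactly what the paper cites at this point --- and it delivers in one stroke both facts you need: a geodesic line $c$ from $\xi_x$ to $\xi_y$, and the property that no such line bounds a flat half-plane. (The second half is the easy part, and your contradiction is the right one: a flat half-plane bounded by $c$ would give a path of length $\pi$ in the Tits boundary from $\xi_x$ to $\xi_y$, so $\Td(\xi_x,\xi_y)\le\pi$. But the existence half cannot be waved through.)

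Your secondary argument --- the one you flag yourself --- is also broken as written, though repairably. Flat strips of increasing widths bounded by $c$ need not nest, and there is no well-defined ``side'' of $c$ to pass to: already for a line in $\E^3$, strips of widths $1$ and $2$ can lie in transverse half-planes through the line, and $\CAT(0)$ uniqueness of geodesics gives no containment between them. The standard repair is compactness rather than nesting: strips of width $w_i\to\infty$ are isometric embeddings $\mathbb{R}\times[0,w_i]\to X$ along $c$, and by properness and Arzel\`a--Ascoli a subsequence converges on compact sets to a flat half-plane bounded by $c$, at which point your Tits-length contradiction applies. Note that this same compactness fact is what converts Ballmann's conclusion ``$c$ bounds no flat half-plane'' into ``there exists $R$ such that $c$ bounds no flat strip of width $R$,'' which is the form needed for Lemma~\ref{lem:Ballmann}; the paper uses it implicitly in the sentence beginning ``In particular.'' Your final step --- choosing $n$ large so that $x_n\in U$ and $y_n\in V$ and concluding $d\bigl(p,[x_n,y_n]\bigr)\le d\bigl(p,c(0)\bigr)+R_0$, with the finitely many remaining $n$ trivially bounded --- agrees with the paper's proof.
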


\begin{proof}
If $\Td(\xi_x,\xi_y) > \pi$
then there exists a geodesic line $c$ in $X$ with endpoints $\xi_x$
and $\xi_y$ that does not bound a half-plane
(see for instance Ballmann \cite[Theorem~II.4.11]{Ballmann95}).
In particular, there exists $R$ such that $c$ does not bound a flat strip
of width $R$.
Once $n$ is sufficiently large, $x_n \in V$ and $y_n \in U$,
where $U$ and $V$ are the neighborhoods given by Lemma~\ref{lem:Ballmann}.
Therefore for all but finitely many $n$, we have
$d\bigl(c(0),[x_n,y_n]\bigr) < R$.
Thus
$d\bigl(p,[x_n,y_n]\bigr)$ remains bounded as $n\to\infty$.
\end{proof}

The next result shows that the convex hull of $C \cup C'$
lies within a uniformly bounded neighborhood of $C \cup C' \cup [p,q]$
where $[p,q]$ is any geodesic of shortest length from $C$ to $C'$.

\begin{prop}
\label{prop:NearShortestPath}
There is a constant $\epsilon_0>0$ such that the following holds.
Choose $C\ne C'$ in $\F$, and let $[p,q]$ be a geodesic of shortest length
from $C$ to $C'$.
Then every geodesic from $C$ to $C'$ comes within a distance $\epsilon_0$
of both $p$ and $q$.
\end{prop}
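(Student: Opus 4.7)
The plan is to show that any two shortest geodesics from $C$ to $C'$ span an isometric flat Euclidean rectangle, and then obtain the uniform bound by contradiction. For the rectangle: since $[p,q]$ is a shortest path from $p\in C$ to the closed convex subset $C'$, the first variation inequality gives $\angle_p(c,q)\ge\pi/2$ for every $c\in C$; in particular $\angle_p(p',q)\ge\pi/2$, and the analogous inequalities hold at $p'$, $q$, and $q'$. Because the angle sum of a CAT$(0)$ geodesic quadrilateral is at most $2\pi$, all four corner angles must equal $\pi/2$. Splitting the quadrilateral along the diagonal $[p,q]$ produces two triangles of angle sum exactly $\pi$; the flat triangle lemma fills each with a flat triangle, and the two glue along $[p,q]$ into a Euclidean rectangle of dimensions $L\times a$, where $L\defeq d(C,C')$ and $a\defeq d(p,p')=d(q,q')$.

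Now assume for contradiction that no uniform $\epsilon_0$ exists: then there is a sequence $(C_n,C'_n)$ in $\F$ with shortest geodesics $[p_n,q_n]$ and $[p'_n,q'_n]$ for which $a_n\defeq d(p_n,p'_n)\to\infty$. Using cocompactness, translate each configuration by an element of $\Ga$ so that every $p_n$ lies in a fixed compact subset of $X$; Lemma~\ref{lem:3.1.1} then lets us pass to a subsequence along which $C_n$ is a single fixed $C\in\F$, $p_n\to p_*\in C$, and $L_n\defeq d(C_n,C'_n)$ either remains bounded or tends to $\infty$. If $L_n$ stays bounded then $q_n$ stays bounded and, by local finiteness again, a further subsequence has $C'_n\equiv C'$ and $L_n\equiv L$; the flat rectangle then contains a midline segment of length $a_n$ lying in $\nbd{C}{L/2}\cap\nbd{C'}{L/2}$, whose diameter is bounded by $\rho(L/2)$ by hypothesis~(B), contradicting $a_n\to\infty$.

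The main obstacle is the case $L_n\to\infty$, in which the direct bound $a_n\le\rho(L_n/2)$ is useless because it grows with $L_n$. Here I apply \cite[Lemma~3.2.3]{HK05}: the rectangle gives $\angle_{p_n}(p'_n,q_n)=\cangle_{p_n}(p'_n,q_n)=\pi/2$, so its hypotheses hold with $\theta_0=\pi/4$ and any $\delta_1>0$ once $n$ is large. Choose $R$ larger than both $D_1(\pi/4)$ and $\rho\bigl(D_1(\pi/4)\bigr)$; the lemma produces $C^{(n)}\in\F$ with $\bigl([p_n,p'_n]\cup[p_n,q_n]\bigr)\cap B(p_n,R)\subset\nbd{C^{(n)}}{D_1(\pi/4)}$. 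The initial subsegment of $[p_n,p'_n]$ of length $R$ lies in $C\cap\nbd{C^{(n)}}{D_1(\pi/4)}$, so hypothesis~(B) forces $C^{(n)}=C$; hence $x_n\defeq[p_n,q_n](R)$ satisfies $d(x_n,C)\le D_1(\pi/4)$. On the other hand, since $[p_n,q_n]$ is a shortest path from $p_n\in C$ to $C'_n$, the first variation inequality gives $\angle_{p_n}(c,q_n)\ge\pi/2$ for every $c\in C$, and the CAT$(0)$ characterization of the projection $\pi_C$ then forces $\pi_C(x_n)=p_n$, so $d(x_n,C)=R>D_1(\pi/4)$, the desired contradiction.
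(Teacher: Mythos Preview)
Your argument establishes only a special case of the proposition: you assume throughout that \emph{both} $[p,q]$ and $[p',q']$ are shortest geodesics from $C$ to $C'$. But the statement asserts that \emph{every} geodesic $[x,y]$ with $x\in C$ and $y\in C'$ passes close to $p$ and $q$, and this is exactly how the proposition is used in Corollary~\ref{cor:FourPoint}, where $[a,a']$ and $[b,b']$ are arbitrary. Your negation step (``there is a sequence $(C_n,C'_n)$ with shortest geodesics $[p_n,q_n]$ and $[p'_n,q'_n]$\ldots'') already builds in this misreading.

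The flat-rectangle step is precisely what fails for arbitrary geodesics: the inequalities $\angle_{p'}(p,q')\ge\pi/2$ and $\angle_{q'}(q,p')\ge\pi/2$ require $[p',q']$ to realise $d(C,C')$, so without that you cannot force the angle sum to $2\pi$, and neither Case~1 (the midline bound via hypothesis~(\ref{item:AppControlledInt})) nor Case~2 (the exact equality $\angle_{p_n}(p'_n,q_n)=\cangle_{p_n}(p'_n,q_n)=\pi/2$ needed for Lemma~3.2.3) survives. The paper avoids this obstruction entirely: it argues by contradiction using the structure of $\tits X$ established in part~(\ref{item:AppTitsBoundary}), together with Ballmann's Lemma~\ref{lem:Ballmann} (via Corollary~\ref{cor:Ballmann}), to show that if $d\bigl(p_n,[x_n,y_n]\bigr)\to\infty$ then suitable limit points $\xi_x,\xi_y$ would have Tits distance at most~$\pi$ while lying in boundaries of distinct elements of~$\F$ (or one in $\boundary C$ and one outside), which is impossible.
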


\begin{proof}
Suppose by way of contradiction that there were a sequence of counterexamples, ie, subspaces $C_i \ne C'_i$
in $\F$, points $p_i,x_i \in C_i$ and $q_i,y_i \in C'_i$
such that $[p_i,q_i]$ is a shortest path from $C_i$ to $C'_i$
and such that $d\bigl( p_i, [x_i,y_i] \bigr)$ tends to infinity.
We have two cases depending on whether $d(C_i,C'_i)$ remains bounded
as $i \to \infty$.

\emph{Case~1:}
Suppose $d(C_i,C'_i)$ remains bounded.
By Theorem~\ref{thm:Main}(\ref{item:AppPeriodic}), the $C_i$ lie in finitely many orbits.  Pass to a subsequence and translate by
the group action so that $C_i=C$ is constant.
Translating by $\Stab(C)$, we can also assume that $C'_i=C'$ is constant.
After passing to a further subsequence, the points $p_i$, $q_i$, $x_i$
and $y_i$ converge respectively to $p \in C$, $q \in C'$,
$\xi_x \in \boundary C$ and $\xi_y \in \boundary C'$.
Since $d\bigl(p,[x_i,y_i] \bigr)$ tends to infinity, it follows
from Corollary~\ref{cor:Ballmann} that $\Td(\xi_x,\xi_y) \le \pi$,
contradicting Theorem~\ref{thm:Main}(\ref{item:AppTitsBoundary}).

\emph{Case~2:} Now suppose the distances $d(C_i,C'_i)$ are unbounded.
After passing to a subsequence and applying elements of $\Gamma$,
we can assume that $C_i=C$ is constant and that the points 
$p_i$, $q_i$, $x_i$ and $y_i$ converge respectively to
$p \in C$, $\xi_q \in \boundary X$, $\xi_x \in \boundary C$
and $\xi_y \in \boundary X$.
Furthermore, $\xi_q \notin \boundary C$ since the ray from $p$
to $\xi_q$ meets $C$ orthogonally.
By hypothesis, $d\bigl( p, [x_i,y_i] \bigr)$ tends to infinity.
Since $d(C,C'_i) = d(p,C'_i) \to \infty$, we also have
$d\bigl( p,[y_i,q_i] \bigr) \to \infty$.
Therefore, by Corollary~\ref{cor:Ballmann}
the points $\xi_x$, $\xi_y$ and $\xi_q$ all lie in the same component
of $\tits X$,
contradicting Theorem~\ref{thm:Main}(\ref{item:AppTitsBoundary}).
\end{proof}

\begin{cor}\label{cor:FourPoint}
There is a constant $\epsilon_1$ such that the following holds.
Suppose $C\ne C' \in \F$ and we have $a,b \in C$ and
$a',b' \in C'$.
Then
\[
   d(a,b) + d(a',b') \le d(a,a') + d(b,b') + \epsilon_1.
\]
\end{cor}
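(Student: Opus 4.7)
The plan is a short application of Proposition~\ref{prop:NearShortestPath}. Let $[p,q]$ be a shortest geodesic from $C$ to $C'$. The segments $[a,a']$ and $[b,b']$ are both geodesics from $C$ to $C'$, so the proposition supplies points $u\in[a,a']$ and $v\in[b,b']$ with $d(u,p)\le\epsilon_0$ and $d(v,p)\le\epsilon_0$. By the triangle inequality, $d(u,v)\le 2\epsilon_0$, so $u$ and $v$ form a pair of near-common waypoints on the two geodesics.

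Next I will route both $d(a,b)$ and $d(a',b')$ through this same pair $(u,v)$. The triangle inequality yields $d(a,b)\le d(a,u)+d(u,v)+d(v,b)\le d(a,u)+d(v,b)+2\epsilon_0$, and likewise $d(a',b')\le d(a',u)+d(v,b')+2\epsilon_0$. Since $u$ lies on the geodesic $[a,a']$ and $v$ lies on $[b,b']$, we have $d(a,u)+d(u,a')=d(a,a')$ and $d(b,v)+d(v,b')=d(b,b')$. Adding the two inequalities therefore gives
\[
   d(a,b)+d(a',b')\le d(a,a')+d(b,b')+4\epsilon_0,
\]
so $\epsilon_1:=4\epsilon_0$ works.

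There is essentially no obstacle in this argument. The main observation is that a single near-common pair of waypoints near $p$ is sufficient, so we do not need the $q$-half of Proposition~\ref{prop:NearShortestPath} or any careful ordering of points along the two geodesics. The estimate can be viewed as an approximate tree four-point inequality $d(a,b)+d(a',b')\le d(a,a')+d(b,b')$, which would hold exactly if $C$ and $C'$ were subtrees of a tree joined by a bridge $[p,q]$; the additive error $4\epsilon_0$ accounts for the fuzziness of that ``bridge'' in the general $\CAT(0)$ setting.
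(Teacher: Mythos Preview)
Your proof is correct and follows essentially the same approach as the paper: apply Proposition~\ref{prop:NearShortestPath} to get waypoints on $[a,a']$ and $[b,b']$ close to the bridge $[p,q]$, then route both $d(a,b)$ and $d(a',b')$ through them via the triangle inequality, arriving at the same constant $\epsilon_1=4\epsilon_0$. Your version is slightly cleaner in that you use only the waypoints near $p$, whereas the paper uses one pair near $p$ for $d(a,b)$ and another pair near $q$ for $d(a',b')$; your observation that the $q$--half of the proposition is unnecessary is correct.
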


\proof
Choose a geodesic $[p,q]$ of shortest length from $C$ to $C'$.
By Proposition~\ref{prop:NearShortestPath}, there are points
$x,x' \in [a,a']$ and $y,y' \in [b,b']$
such that $x$ and $y$ are within a distance $\epsilon_0$ of $p$
and $x'$ and $y'$ are within a distance $\epsilon_0$ of $q$.
Therefore
\begin{align*}
   d(a,b) + d(a',b')
     &\le d(a,x) + d(x,y) + d(y,b) + d(a',x') + d(x',y') + d(y',b') \\
     &\le d(a,a') + d(b,b') + 4\epsilon_0. \rlap{\hspace{2.13in}\qedsymbol}
\end{align*}

\begin{prop}\label{prop:AtMostOnePoint}
Suppose $C_\om,C'_\om \in \F_\omega$.
If $C_\om\ne C'_\om$ then $C_\om \cap C'_\om$ contains at most one point.
\end{prop}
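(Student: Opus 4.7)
The strategy is to derive the proposition directly from the four-point inequality in Corollary~\ref{cor:FourPoint} via a routine ultralimit argument.

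Suppose for contradiction that distinct points $x,y$ both lie in $C_\om \cap C'_\om$. Writing $C_\om = \ulim C_n$ and $C'_\om = \ulim C'_n$, the hypothesis $C_\om \ne C'_\om$ combined with $\om$ being a nonprincipal ultrafilter forces $C_n \ne C'_n$ for $\om$-almost every $n$; otherwise the two ultralimit sets would literally coincide. Using the definition of $\F_\om$, I will then choose representative sequences $x_n \in C_n$ and $\tilde x_n \in C'_n$ for $x$, and $y_n \in C_n$ and $\tilde y_n \in C'_n$ for $y$. These satisfy
$$\ulim \lambda_n^{-1} d(x_n, \tilde x_n) \;=\; 0 \;=\; \ulim \lambda_n^{-1} d(y_n, \tilde y_n),$$
while $\ulim \lambda_n^{-1} d(x_n, y_n) = d_{X_\om}(x,y) > 0$, and similarly for the $\tilde{}\mkern2mu$ version.

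Next I would apply Corollary~\ref{cor:FourPoint} to the pair $C_n \ne C'_n$ with $a = x_n$, $b = y_n$, $a' = \tilde x_n$, $b' = \tilde y_n$, obtaining
$$d(x_n, y_n) + d(\tilde x_n, \tilde y_n) \;\le\; d(x_n, \tilde x_n) + d(y_n, \tilde y_n) + \epsilon_1.$$
Divide by $\lambda_n$ and take the $\om$-limit. Since $\lambda_n \to \infty$, the constant $\epsilon_1$ drops out, the two mixed-distance terms on the right vanish by construction, and the left side tends to $2\, d_{X_\om}(x,y) > 0$. This is the desired contradiction.

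I do not anticipate a genuine obstacle at this step---the argument is essentially formal once Corollary~\ref{cor:FourPoint} is available. All the real geometric work has already been done in Proposition~\ref{prop:NearShortestPath}, where Ballmann's geodesic-trapping lemma is used to force every geodesic between $C$ and $C'$ to fellow-travel with a shortest such segment near its endpoints. The only bookkeeping to verify is that representative sequences for a point of $C_\om \cap C'_\om$ can indeed be chosen inside $C_n$ and $C'_n$ respectively, which is immediate from the definition of $\F_\om$.
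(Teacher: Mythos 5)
Your proposal is correct and follows essentially the same route as the paper's own proof: choose representative sequences for the two common points inside $C_n$ and $C'_n$, apply Corollary~\ref{cor:FourPoint}, rescale by $\lambda_n^{-1}$, and take the $\om$-limit so that the additive constant $\epsilon_1$ vanishes. The only difference is presentational---you argue by contradiction with $d(x,y)>0$ while the paper concludes $d(a,b)\le 0$ directly---and your side remarks (that $C_n\ne C'_n$ $\om$-almost everywhere and that representatives can be chosen in $C_n$, $C'_n$) match the paper's reasoning exactly.
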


\begin{proof}
Suppose $C_\om \ne C'_\om$.
Then $C=\ulim C_n$ and $C'_\om = \ulim C'_n$,
where $C_n \ne C'_n$ for $\om$--almost all $n$.
If $a,b \in C$, they are represented by sequences
$(a_n)$ and $(b_n)$ such that $a_n,b_n \in C_n$.
If $a,b$ are also in $C'$, they can also be represented by sequences
$(a'_n)$ and $(b'_n)$ with $a'_n,b'_n\in C'_n$.
Furthermore
\[
   \ulim \lambda_n^{-1} d(a_n,a'_n)
   = \ulim \lambda_n^{-1} d(b_n,b'_n)
   = 0.
\]
By Corollary~\ref{cor:FourPoint} we see that
\[
   d(a,b)
    = \ulim \lambda_n^{-1} d(a_n,b_n)
    \le \ulim \lambda_n^{-1} \bigl( d(a_n,a'_n)
        + d(b_n,b'_n) + \epsilon_1 \bigr)
    = 0.
\]
Thus $a=b$.
\end{proof}

\section{Proofs of Theorem~\ref{thm:Main} assertions
(\ref{item:AppConeDirections}), (\ref{item:AppTreeGraded})
and (\ref{item:AppRelHyp})}

The proofs in this section are modeled closely on arguments from
\cite{HK05}.
Indeed, the reader will not find any substantially new ideas in this
section.
However, in many places minor modifications are necessary to adapt the
proofs from the isolated flats setting to the present level of generality.
In these places we have provided the detailed arguments
for the benefit of the reader.

The proof of the following proposition is identical to that of
\cite[Proposition~3.2.5]{HK05}.

\begin{prop}\label{prop:3.2.5}
For all $\theta_0>0$ there are $\delta_2=\delta_2(\theta_0)>0$
and $\rho_2 = \rho_2(\theta_0)$ such that if $x,y,z \in X$, all vertex
angles and comparison angles of $\Delta(x,y,z)$ lie in
$(\theta_0,\pi-\theta_0)$, each vertex angle is within $\delta_2$ of the
corresponding comparison angle,
and all three sides of $\Delta(x,y,z)$ have length greater than $\rho_2$,
then
\[
   [x,y] \cup [x,z] \cup [y,z] \subset \nbd{C}{D_1(\theta_0)}
\]
for some $C \in \mathcal{F}$. \qed
\end{prop}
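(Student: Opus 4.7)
The natural strategy is to apply Lemma~3.2.3 at each of the three vertices of $\Delta(x, y, z)$ to produce candidate subspaces $C_x, C_y, C_z \in \F$, and then force them to coincide using the controlled intersection hypothesis~(\ref{item:AppControlledInt}) of Theorem~\ref{thm:Main}. Letting $\rho = \rho(D_1(\theta_0))$ denote the constant provided by hypothesis~(\ref{item:AppControlledInt}) for $r = D_1(\theta_0)$, I would choose a radius $R = R(\theta_0)$ sufficiently large (conditions below), then set $\delta_2(\theta_0) := \delta_1(\theta_0, R)$ and $\rho_2(\theta_0) := \max\bigl\{\rho_1(\theta_0, R),\, \rho + 1\bigr\}$. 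Applying Lemma~3.2.3 at a vertex $v \in \{x, y, z\}$ then yields $C_v \in \F$ such that the initial sub-segments of the two adjacent sides, each of length up to $R$, are contained in $\nbd{C_v}{D_1(\theta_0)}$.

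Once the three candidates are shown to coincide as a single $C \in \F$, the full conclusion follows from convexity: each vertex $v$ lies in $\nbd{C_v}{D_1(\theta_0)} = \nbd{C}{D_1(\theta_0)}$, and since $C$ is closed and convex, so is its $D_1(\theta_0)$-tubular neighborhood; containing the three vertices forces containment of the three geodesic sides. To identify $C_x$ with $C_y$, one compares the two initial portions of $[x,y]$ — one at $x$ lying in $\nbd{C_x}{D_1(\theta_0)}$, the other at $y$ lying in $\nbd{C_y}{D_1(\theta_0)}$. When they share a common sub-segment of diameter exceeding $\rho$, hypothesis~(\ref{item:AppControlledInt}) forces $C_x = C_y$; identical reasoning on the other two sides ties all three to a single $C$.

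The principal obstacle is the absence of any a~priori upper bound on the side lengths of the triangle: when a side exceeds $2R$, the two length-$R$ initial portions at its opposite endpoints fail to overlap on $[x,y]$, and the simple overlap argument does not apply. The intended resolution is a sliding-vertex argument: pick an interior point $p$ on, say, $[y, z]$, and verify that the sub-triangle $\Delta(x, y, p)$ remains sufficiently fat — its vertex and comparison angles vary continuously in $p$, and its sides remain long, provided $\delta_2$ is chosen small enough and $\rho_2$ large enough. Applying Lemma~3.2.3 at $p$ produces some $C_p \in \F$; an overlap comparison between the initial portion at $y$ along $[y, p]$ (lying in $\nbd{C_y}{D_1(\theta_0)}$) and the initial portion at $p$ along $[p, y]$ (lying in $\nbd{C_p}{D_1(\theta_0)}$) forces $C_p = C_y$. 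A connectedness / maximality argument along $[y, z]$ then pins down the same $C$ throughout, and one concludes $C_y = C_z$. The technical heart of the argument is verifying that the sliding sub-triangles remain fat enough for Lemma~3.2.3 to apply uniformly as $p$ varies.
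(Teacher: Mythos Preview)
The paper does not actually supply a proof here: it simply records that the argument is identical to that of \cite[Proposition~3.2.5]{HK05} and closes with a \qedsymbol. So there is nothing in the present paper to compare against beyond the toolkit of Lemmas~3.2.2 and~3.2.3 it has just set up.

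Your overall strategy---apply Lemma~3.2.3 at each of the three vertices to obtain $C_x,C_y,C_z\in\F$, and then force coincidence via hypothesis~(\ref{item:AppControlledInt})---is exactly the natural route given those tools, and is consistent with how \cite{HK05} assembles the surrounding results. You have also put your finger on the genuine issue: when a side has length exceeding $2R$, the two radius--$R$ initial portions produced by Lemma~3.2.3 at its endpoints fail to overlap, so a single application of~(\ref{item:AppControlledInt}) along that side does not suffice.

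One caution about the sliding-vertex fix. To apply Lemma~3.2.3 at an interior point $p\in[y,z]$ with respect to $x$ and $y$, you need the gap $\cangle_p(x,y)-\angle_p(x,y)$ to stay below a fixed $\delta_1$, uniformly as $p$ varies. This is a statement about the sub-triangle $\Delta(x,y,p)$, not about $\Delta(x,y,z)$, and it does not follow from mere continuity of the angles. What makes it work is a $\CAT(0)$ comparison argument (Alexandrov's lemma applied to the subdivision of $\Delta(x,y,z)$ at $p$): near-equality of vertex and comparison angles in the big triangle forces near-equality in both sub-triangles, with the total deficit distributed between them. Once that is established, your chain-of-overlaps argument along each side goes through and yields $C_x=C_y=C_z$; the convexity step at the end is correct as stated.
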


\begin{lem}[\emph{cf} Lemma~3.3.1, \cite{HK05}]
\label{lem:3.3.1}
For all $\theta_0 > 0$ there is a $\delta_3=\delta_3(\theta_0)>0$
such that if $x,y,z \in X_\om$ are distinct, all vertex angles and comparison
angles of $\Delta(x,y,z)$ lie in $(\theta_0,\pi-\theta_0)$, and each
vertex angle is within $\delta_3$ of the corresponding comparison
angle, then
\[
   [x,y] \cup [x,z] \cup [y,z] \subset C_\om
\]
for some $C_\om \in \F_\om$.
\end{lem}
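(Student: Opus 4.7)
The plan is to realize $\Delta(x,y,z) \subset X_\om$ as an ultralimit of triangles $\Delta(x_n, y_n, z_n)$ in $X$, apply Proposition~\ref{prop:3.2.5} in $X$ (possibly to sub-triangles) to produce subspaces $C_n \in \F$ whose $D_1(\theta_0)$-neighborhoods contain the relevant triangles, and pass to the ultralimit $C_\om = \ulim C_n \in \F_\om$. Proposition~\ref{prop:AtMostOnePoint} enters at the end to glue together any locally defined candidates into a single subspace containing the entire perimeter. To set up, write $x = \ulim x_n$, $y = \ulim y_n$, $z = \ulim z_n$. Since $x,y,z$ are distinct in $X_\om$, the rescaled side lengths $\la_n^{-1} d(x_n, y_n)$ converge to positive values, so the sides of $\Delta(x_n, y_n, z_n)$ in $X$ are $\om$-eventually larger than $\rho_2(\theta_0)$. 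Comparison angles are scale-invariant and depend continuously on pairwise distances, so those of $\Delta(x_n, y_n, z_n)$ $\om$-converge to the corresponding comparison angles in $X_\om$ and hence $\om$-eventually lie in $(\theta_0, \pi - \theta_0)$.

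The chief obstacle is transferring the near-equality $\tilde\angle - \angle \le \delta_3$ of vertex and comparison angles from $X_\om$ down to $X$. For CAT(0) ultralimits one only has the one-sided inequality $\angle_x(y,z) \ge \ulim \angle_{x_n}(y_n, z_n)$, and strict inequality is possible, so the deficit $\tilde\angle - \angle$ in $X$ may well exceed that in $X_\om$. Consequently, Proposition~\ref{prop:3.2.5} cannot be applied to $\Delta(x_n, y_n, z_n)$ unconditionally. I would circumvent this by applying Proposition~\ref{prop:3.2.5} instead to \emph{sub-triangles} $\Delta(x_n, y'_n, z'_n)$ of $\Delta(x_n, y_n, z_n)$ in $X$, obtained by replacing $y_n$ and $z_n$ by interior points of the sides adjacent to $x_n$ at positive rescaled distance $s$ from $x_n$. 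As $s$ decreases, the comparison angle at the vertex $x_n$ of the sub-triangle tends to $\angle_{x_n}(y_n, z_n)$ (a scale-independent statement in $X$), and a symmetric analysis controls the angles at $y'_n$ and $z'_n$. By choosing $\delta_3$ small enough (as a function of $\delta_2(\theta_0)$) and $s$ small in $X_\om$, one arranges that $\om$-almost every such sub-triangle in $X$ satisfies the hypothesis of Proposition~\ref{prop:3.2.5}, producing $C_n \in \F$ with $\Delta(x_n, y'_n, z'_n) \subset \nbd{C_n}{D_1(\theta_0)}$.

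Passing to the ultralimit, $C_\om = \ulim C_n$ lies in $\F_\om$, since $d(x_n, C_n) \le D_1(\theta_0)$ and $\ulim \la_n^{-1} d(x_n, \star_n) < \infty$, and the rescaled $D_1(\theta_0)$-neighborhood of $C_n$ degenerates to $C_\om$ in $X_\om$. Hence the corresponding sub-segment in $X_\om$ lies in $C_\om$. Performing this construction with sub-triangles based near each vertex of $\Delta(x, y, z)$ (and along the interior of each side) produces a finite collection of candidates $C_\om^{(i)} \in \F_\om$, each containing a non-degenerate sub-segment of a side of $\Delta(x,y,z)$. Proposition~\ref{prop:AtMostOnePoint} now applies decisively: any two $C_\om^{(i)}$ sharing more than a single point must coincide. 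Choosing the sub-segments to overlap pairwise in non-degenerate intersections along the entire perimeter, a single $C_\om \in \F_\om$ must contain $[x,y] \cup [y,z] \cup [x,z]$, completing the proof.
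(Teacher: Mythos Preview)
Your identification of the obstacle is exactly right: for arbitrary representatives one only has
$\ulim \angle_{x_n}(y_n,z_n) \le \angle_x(y,z)$, so the angle deficit may blow up in $X$ and Proposition~\ref{prop:3.2.5} cannot be applied to $\Delta(x_n,y_n,z_n)$ directly.  However, your proposed workaround does not actually close this gap.  When you pass to a sub-triangle $\Delta(x_n,y'_n,z'_n)$ with $y'_n\in[x_n,y_n]$ and $z'_n\in[x_n,z_n]$, the vertex angle at $x_n$ is \emph{unchanged}: it is still $\angle_{x_n}(y_n,z_n)$, since the two geodesic germs at $x_n$ are the same.  Shrinking $s$ only drives the \emph{comparison} angle at $x_n$ down toward this (possibly tiny) vertex angle, so you lose the lower bound $\theta_0$ on the comparison angle rather than gaining a lower bound on the vertex angle.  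Thus the hypothesis of Proposition~\ref{prop:3.2.5} that all angles lie in $(\theta_0,\pi-\theta_0)$ can still fail.  The phrase ``a symmetric analysis controls the angles at $y'_n$ and $z'_n$'' is likewise unjustified: those vertex angles in $X$ are not controlled by anything you have assumed.

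The paper's proof avoids this entirely by invoking \cite[Corollary~2.4.2]{HK05}, which asserts that one can \emph{choose} representatives $(x_k),(y_k),(z_k)$ so that each vertex angle of $\Delta(x,y,z)$ equals the ultralimit of the corresponding vertex angle of $\Delta(x_k,y_k,z_k)$.  With such representatives the hypotheses of Proposition~\ref{prop:3.2.5} hold for $\om$--almost all $k$, one obtains a single $C_k\in\F$ for each $k$, and $C_\om=\ulim C_k$ does the job in one stroke.  In particular, no sub-triangle patching is needed, and Proposition~\ref{prop:AtMostOnePoint} plays no role in this lemma; it enters only later, in the proofs of assertions~(\ref{item:AppConeDirections}) and~(\ref{item:AppTreeGraded}).
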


\begin{proof}
The proof is essentially the same as the first part of
\cite[Lemma~3.3.1]{HK05}.
Choose $\theta_0$, and let $\delta_2$ and $\rho_2$ be the constants
provided by Proposition~\ref{prop:3.2.5}.
Set $\delta_3:=\delta_2$.
Choose $x,y,z \in X_\om$ as above,
and apply \cite[Corollary~2.4.2]{HK05} to get sequences
$(x_k)$, $(y_k)$ and $(z_k)$ representing $x$, $y$ and $z$
such that each vertex angle of $\Delta(x,y,z)$
is the ultralimit of the corresponding angle of $\Delta(x_k,y_k,z_k)$.
Then $\Delta(x_k,y_k,z_k)$ satisfies the hypothesis
of Proposition~\ref{prop:3.2.5} for $\om$--almost all $k$.
Consequently $x$, $y$ and $z$ lie in some $C_\om \in \F_\om$.
\end{proof}

\begin{proof}[Proof of Theorem~\ref{thm:Main}(\ref{item:AppConeDirections})]
The proof is similar to the proof of
\cite[Proposition~3.3.2]{HK05}.
If $\oa{x y}, \oa{x z} \in \Sigma_x X_\om$ and $0<\angle_x(y,z)<\pi$
then $\angle_x(y,z) \in (\theta,\pi-\theta)$ for some positive $\theta$.
Let $\delta_3=\delta_3(\theta/8)$ be the constant given by
Lemma~\ref{lem:3.3.1}, and let $\delta:=\min\{\delta_3,\theta/4\}$.
By \cite[Proposition~2.2.3]{HK05} there exist points $y' \in [x,y]$
and $z' \in [x,z]$ such that
the angles of $\Delta(x,y',z')$ are within $\delta$ of their
respective comparison angles
and also such that $d(x,y')=d(x,z')$.
Since $\delta\le \theta/4$, the angles of $\Delta(x,y',z')$ at $y'$ and $z'$
lie in the interval $(\theta/8,\pi/2)$.
Lemma~\ref{lem:3.3.1} now implies that
\[
   [x,y'] \cup [x,z'] \cup [y',z'] \subset C_\om
\]
for some $C_\om \in \F_\om$.
Thus the directions $\oa{x y}$ and $\oa{x z}$ both lie in
$\Sigma_x C_\om$ and any geodesic representing either direction
has an initial segment that lies in $C_\om$.
The uniqueness of $C_\om$ is an immediate consequence of
Proposition~\ref{prop:AtMostOnePoint}.

More generally, suppose $\oa{x y}$ and $\oa{x z}$ are distinct
directions in the same component of $\Si_x X_\om$.
Then there is a sequence $y=y_0,\dots,y_\ell=z$
such that $0 < \angle_x(y_{i-1},y_{i}) < \pi$ for $i=1,\dots,\ell$.
By the previous
paragraph, $\oa{x y_{i-1}}$ and $\oa{x y_{i}}$ both lie in
$\Sigma_x C_i$ for a unique $C_i \in \F_\om$,
and $[x,y_{i-1}]$ and $[x,y_{i}]$ have initial segments in $C_i$.
Since $[x,y_i]$ has initial segments in both $C_i$ and $C_{i+1}$,
it follows from Proposition~\ref{prop:AtMostOnePoint} that
$C_1=\cdots=C_{\ell-1}$.
\end{proof}

\begin{cor}[\emph{cf} Corollary~3.3.3, \cite{HK05}]
\label{cor:3.3.3}
Let $\pi_{C_\om} \colon X_\om \to C_\om$ be the nearest point projection.
Then $\pi_{C_\om}$ is locally constant on $X_\om \setminus C_\om$.
\end{cor}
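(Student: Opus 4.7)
The plan is to argue by contradiction: suppose $\pi_{C_\om}$ fails to be locally constant at some $x \in X_\om \setminus C_\om$. Then there is a sequence $y_n \to x$ with $p_n := \pi_{C_\om}(y_n) \ne p$, where $p := \pi_{C_\om}(x)$. Since the nearest point projection onto a closed convex subset of a $\CAT(0)$ space is $1$--Lipschitz, $d(p,p_n) \le d(x,y_n) \to 0$, so $p_n \to p$. Set $r := d(x,p) > 0$.

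The heart of the argument is to force $p$ onto the geodesic $[x,p_n]$ for large $n$, by combining the $\CAT(0)$ projection property at $p$ with Theorem~\ref{thm:Main}(\ref{item:AppConeDirections}). The projection property gives $\angle_p(x,p_n) \ge \pi/2$, and I will rule out $\angle_p(x,p_n) \in [\pi/2,\pi)$. For $n$ large enough that $d(p,p_n) < r$, the directions $\oa{px}$ and $\oa{pp_n}$ are distinct: the alternative $\oa{px} = \oa{pp_n}$ would force either $p_n \in [p,x]$ (giving $d(x,p_n) < r$) or $x \in [p,p_n]$ (giving $d(p,p_n) \ge r$), both incompatible with $p = \pi_{C_\om}(x)$ and $d(p,p_n) < r$. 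If these two distinct directions are at angular distance less than $\pi$, they lie in a common nontrivial component of $\Sigma_p X_\om$, which by Theorem~\ref{thm:Main}(\ref{item:AppConeDirections}) is contained in $\Sigma_p C''_\om$ for a unique $C''_\om \in \F_\om$, with initial segments of $[p,x]$ and $[p,p_n]$ lying in $C''_\om$. Since $[p,p_n] \subset C_\om$ by convexity, a nondegenerate initial arc of $[p,p_n]$ lies in $C_\om \cap C''_\om$, and Proposition~\ref{prop:AtMostOnePoint} forces $C''_\om = C_\om$. But then an initial subsegment of $[p,x]$ lies in $C_\om$, producing a point $q \in (p,x]\cap C_\om$ with $d(x,q) < d(x,p)$, contradicting $p = \pi_{C_\om}(x)$. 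Hence $\angle_p(x,p_n) = \pi$, i.e., $p \in [x,p_n]$.

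To close, observe that $p \in [x,p_n]$ implies $\oa{p_n x} = \oa{p_n p}$, and therefore
\[
\angle_{p_n}(y_n,x) = \angle_{p_n}(y_n,p) \ge \pi/2
\]
by the $\CAT(0)$ projection property at $p_n$ applied to $p \in C_\om$. On the other hand, $d(x,y_n) \to 0$ while $d(p_n,x)$ and $d(p_n,y_n)$ both converge to $r$, so the comparison angle $\cangle_{p_n}(y_n,x)$ tends to $0$; the $\CAT(0)$ bound $\angle_{p_n}(y_n,x) \le \cangle_{p_n}(y_n,x)$ then forces $\angle_{p_n}(y_n,x) \to 0$, contradicting the lower bound $\pi/2$.

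The step I expect to be the main obstacle is proving $\angle_p(x,p_n) = \pi$: this is where Theorem~\ref{thm:Main}(\ref{item:AppConeDirections}) and Proposition~\ref{prop:AtMostOnePoint} must be combined with the nearest-point characterization of $p$ in order to exclude the half-open interval $[\pi/2,\pi)$. Once $p$ has been placed on $[x,p_n]$, the final contradiction is a routine $\CAT(0)$ comparison.
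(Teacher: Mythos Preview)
Your argument is correct. Both your proof and the paper's hinge on the same structural fact---that the direction $\oa{p\,x}$ from the footpoint $p=\pi_{C_\om}(x)$ makes angle $\pi$ with every direction into $C_\om$, which follows from Theorem~\ref{thm:Main}(\ref{item:AppConeDirections}) together with Proposition~\ref{prop:AtMostOnePoint}. The difference is in how this is deployed. The paper argues directly: setting $x:=\pi_{C_\om}(s)$, it observes that $\oa{xs}\notin\Sigma_x C_\om$, and then uses continuity of $\log_x$ on a connected neighborhood $U\ni s$ in $X_\om\setminus\{x\}$ to conclude that $\log_x(U)$ is a connected set meeting no $\Sigma_x C_\om$ direction; by Theorem~\ref{thm:Main}(\ref{item:AppConeDirections}) every direction in $\log_x(U)$ is therefore at angular distance $\pi$ from $\Sigma_x C_\om$, so every $s'\in U$ still projects to $x$. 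Your proof instead runs by contradiction through a sequence: you force $p\in[x,p_n]$ and then finish with a comparison-angle computation at $p_n$. The paper's route is shorter and avoids the endgame estimate entirely, treating all nearby points at once via the logarithm map; your route is more explicit and avoids invoking continuity of $\log_x$. One small redundancy: your paragraph establishing that $\oa{px}\ne\oa{pp_n}$ is unnecessary, since $\angle_p(x,p_n)\ge\pi/2>0$ already gives distinctness of directions.
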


\begin{proof}
Choose $s \in X_\om \setminus C_\om$, and let $x := \pi_{C_\om}(s)$.
Then $\angle_x(S,F)$ is at least $\pi/2$.
In particular, the direction $\oa{x s} \notin \Si_x C_\om$.
By continuity of $\log_x$,
if $U$ is any connected set containing $s$ in $X_\om \setminus \{x\}$
then $\log_x(U)$ is a connected set containing $\oa{x s}$.
Since $\log_x(U)$ is not contained in $\Si_x C_\om$, it follows from
Theorem~\ref{thm:Main}(\ref{item:AppConeDirections})
that $\log_x(U)$ is disjoint from $\Si_x C_\om$,
and that
each point of $\log_x(U)$ is at an angular distance $\pi$
from $\Si_x C_\om$.
Hence for each $s' \in U$, we have $\pi_{C_\om}(s') = x$.
\end{proof}

\begin{lem}[\emph{cf} Lemma~3.3.4, \cite{HK05}]
\label{lem:3.3.4}
If $p$ lies in the interior of the geodesic $[x,y] \subset X_\om$
and $x$ and $y$ lie in the same component of $X_\om \setminus \{p\}$
then $p$ is contained in an open subarc of $[x,y]$ that lies in
$C_\om$ for some $C_\om \in \F_\om$.
\end{lem}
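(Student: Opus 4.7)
The plan is to exploit the hypothesis via the space of directions $\Sigma_p X_\om$, reducing the statement to Theorem~\ref{thm:Main}(\ref{item:AppConeDirections}).

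First I would observe that since $p$ is interior to the geodesic $[x,y]$, the two directions $\oa{p\, x}$ and $\oa{p\, y}$ are antipodal in $\Sigma_p X_\om$, so in particular they are distinct. By hypothesis there is a connected subset $K\subset X_\om\setminus\{p\}$ containing both $x$ and $y$. The logarithm map $\log_p\colon X_\om\setminus\{p\}\to\Sigma_p X_\om$ is continuous, so $\log_p(K)$ is a connected subset of $\Sigma_p X_\om$ containing both $\oa{p\, x}$ and $\oa{p\, y}$. Hence these two directions lie in the same connected component of $\Sigma_p X_\om$, and that component contains more than one point.

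Next I would apply Theorem~\ref{thm:Main}(\ref{item:AppConeDirections}) at the basepoint $p$: the component containing $\oa{p\, x}$ and $\oa{p\, y}$ is contained in $\Sigma_p C_\om$ for a unique $C_\om\in\F_\om$. The second assertion of that theorem then guarantees that an initial segment $[p,p_x]$ of $[p,x]$ and an initial segment $[p,p_y]$ of $[p,y]$ both lie in~$C_\om$. Since $p$ lies in the interior of $[x,y]$, these two initial segments are sub-segments of $[x,y]$ emanating from $p$ in opposite directions, so their union is an open subarc of $[x,y]$ containing $p$. Because $C_\om$ is convex (being an ultralimit of closed convex sets), this entire subarc lies in $C_\om$.

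The main obstacle is really just a sanity check: one needs to know that $\log_p$ is continuous on $X_\om\setminus\{p\}$, which is a standard property of $\CAT(0)$ spaces (the asymptotic cone $X_\om$ is itself $\CAT(0)$), and that the two initial segments lie along $[x,y]$ rather than along some other geodesics from $p$ to $x$ and $p$ to~$y$. Both points are immediate: geodesics in a $\CAT(0)$ space are unique, so $[p,x]$ and $[p,y]$ are subsegments of $[x,y]$, and their initial segments concatenate at $p$ into a genuine sub-arc of the geodesic $[x,y]$.
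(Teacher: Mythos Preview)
Your proposal is correct and follows essentially the same approach as the paper: use continuity of $\log_p$ to place $\oa{p\,x}$ and $\oa{p\,y}$ in the same nontrivial component of $\Sigma_p X_\om$, then invoke Theorem~\ref{thm:Main}(\ref{item:AppConeDirections}) to obtain initial segments of $[p,x]$ and $[p,y]$ in some $C_\om$. Your write-up is in fact more careful than the paper's terse two-sentence version, making explicit the concatenation into an open subarc and the sanity checks about uniqueness of geodesics.
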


\begin{proof}
By continuity of $\log_p$, the directions $\oa{px}$ and $\oa{py}$
lie in the same component of $\Si_x F_\om$, which is therefore nontrivial.
Theorem~\ref{thm:Main}(\ref{item:AppConeDirections})
now implies that initial segments of $[p,x]$ and $[p,y]$ lie in
$C_\om$ for some $C_\om \in \F_\om$.
\end{proof}

\begin{lem}[\emph{cf} Lemma~3.3.5, \cite{HK05}]
\label{lem:3.3.5}
Every embedded loop in $X_\om$ lies in some $C_\om \in \F_\om$. 
\end{lem}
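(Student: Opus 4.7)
The plan is first to find a single $C_\om \in \F_\om$ containing at least two distinct points of $\gamma$, and then to use Corollary~\ref{cor:3.3.3} to push the remainder of the loop into that $C_\om$. Throughout I will rely on the fact that each $C_\om \in \F_\om$, being an ultralimit of closed convex subsets of $X$, is closed (and convex) in $X_\om$, and that nearest-point projection onto a closed convex subset of a $\CAT(0)$ space is $1$--Lipschitz, hence continuous.

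For the first step, pick two distinct points $p, q \in \gamma$. I claim the entire geodesic $[p,q]$ sits inside a single $C_\om \in \F_\om$. Indeed, for any interior point $r$ of $[p,q]$, the two arcs into which $p$ and $q$ divide $\gamma$ cannot both contain $r$ (since $\gamma$ is embedded and $r \ne p,q$), so at least one of them provides a path from $p$ to $q$ in $X_\om \setminus \{r\}$. Lemma~\ref{lem:3.3.4} then yields an open subarc of $[p,q]$ around $r$ contained in some $C_\om(r) \in \F_\om$. By Proposition~\ref{prop:AtMostOnePoint}, two such elements of $\F_\om$ sharing a non-degenerate overlap must coincide, so $C_\om(r)$ is uniquely determined by $r$ and the assignment $r \mapsto C_\om(r)$ is locally constant on the open interval $(p,q)$. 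Connectedness makes it constant, and closedness of $C_\om$ upgrades the conclusion from $(p,q)$ to $[p,q]$; in particular $\{p,q\} \subseteq \gamma \cap C_\om$.

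For the second step, I would suppose for contradiction that $\gamma \not\subseteq C_\om$ and pick a non-empty connected component $U$ of $\gamma \setminus C_\om$. Since $\gamma \cap C_\om$ contains at least the two points $p$ and $q$, the pre-image of $U$ in the parametrizing circle is a proper open arc, and its two endpoints $a \ne b$ satisfy $\gamma(a), \gamma(b) \in C_\om$. The set $U$ lies in a single connected component of $X_\om \setminus C_\om$, so Corollary~\ref{cor:3.3.3} forces $\pi_{C_\om}|_U \equiv x$ for some $x \in C_\om$. Continuity of $\pi_{C_\om}$ then gives $\pi_{C_\om}(\gamma(a)) = \pi_{C_\om}(\gamma(b)) = x$; but $\gamma(a), \gamma(b)$ already lie in $C_\om$ and are therefore fixed by $\pi_{C_\om}$, so $\gamma(a) = \gamma(b) = x$, contradicting the embeddedness of $\gamma$.

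The hard part is the first step, where one has to patch the local data produced by Lemma~\ref{lem:3.3.4} into a single global $C_\om$. This is precisely where Proposition~\ref{prop:AtMostOnePoint} does the decisive work: without uniqueness of intersections, nothing would prevent consecutive interior points of $[p,q]$ from sitting in different $C_\om$'s sharing long overlaps, and then both the constancy of $r \mapsto C_\om(r)$ in step one and the projection argument in step two would break down.
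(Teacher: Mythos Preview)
Your proof is correct and follows essentially the same approach as the paper: first use Lemma~\ref{lem:3.3.4} at each interior point of the geodesic $[p,q]$ together with Proposition~\ref{prop:AtMostOnePoint} to find a single $C_\om$ containing $[p,q]$, and then use Corollary~\ref{cor:3.3.3} on a maximal open subarc of $\gamma$ outside $C_\om$ to derive a contradiction with embeddedness. The only differences are cosmetic---you spell out a few details (closedness of $C_\om$, continuity of $\pi_{C_\om}$, why the arc of $\gamma$ avoids $r$) that the paper leaves implicit.
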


\begin{proof}
Let $\gamma$ be an embedded loop containing points $x \ne y$.
For each $p \in [x,y]$, the loop $\gamma$ provides a path from
$x$ to $y$ that avoids $p$.
By Lemma~\ref{lem:3.3.4}, an open subarc of $[x,y]$ containing $p$
lies in some $C_p \in \F_\om$.
The interior of $[x,y]$ is covered by these open intervals.
By Proposition~\ref{prop:AtMostOnePoint}, it follows that $C_p=C_\om$
is independent of the choice of $p$.
Let $\beta$ be a maximal open subpath of $\gamma$ in the complement
of $C_\om$.  It follows from Corollary~\ref{cor:3.3.3}
that $\beta$ projects to a constant under $\pi_{C_\om}$.
Hence the endpoints of $\beta$ coincide, which is absurd.
\end{proof}

\begin{proof}[Proof of Theorem~\ref{thm:Main}(\ref{item:AppTreeGraded})]
Each $C \in \F$ is closed and convex in $X$.
Therefore each $C_\om \in \F_\om$ is closed and convex in $X_\om$.
By Proposition~\ref{prop:AtMostOnePoint}, distinct subspaces
$C_\om,C'_\om \in \F_\om$ intersect in at most one point.
Furthermore, Lemma~\ref{lem:3.3.5} implies that every embedded geodesic
triangle in $X_\om$ lies in some $C_\om \in \F_\om$.
\end{proof}

\begin{proof}[Proof of Theorem~\ref{thm:Main}(\ref{item:AppRelHyp})]
Let $\mathcal{P}$ be a set of representatives of the finitely many
conjugacy classes of stabilizers of elements of $\F$.
The action of $\Gamma$ on $X$ induces a quasi-isometry $\Gamma \to X$
that induces a one-to-one correspondence between
the left cosets of elements of $\mathcal{P}$ and
the elements of $\F$.
It follows from \Drutu--Sapir \cite[Theorem~5.1]{DrutuSapirTreeGraded}
that every asymptotic cone of $\Gamma$ is 
tree graded with respect to ultralimits of sequences of left cosets
of elements of $\mathcal{P}$.
Now
\cite[Theorem~1.11]{DrutuSapirTreeGraded} implies that $\Gamma$
is relatively hyperbolic with respect to $\mathcal{P}$.
\end{proof}


\bibliographystyle{gtart}
\bibliography{erratum}

\end{document}